\newcommand{\newjointcountertheorem}[3]{
	\newaliascnt{#1}{#2}
	\newtheorem{#1}[#1]{#3}
	\aliascntresetthe{#1}	
}
\newtheorem{thm}{Theorem}[section]
\theoremstyle{definition}
\def\Snospace~{\S{}}
\renewcommand{\mathbb}{\mathbf}
\newcommand{\supp}{\mathrm{supp}}
\newcommand{\Q}{\mathbb{Q}}
\newcommand{\Z}{\mathbb{Z}}
\newcommand{\PP}{\mathbb{P}}
\newcommand{\N}{\mathbb{N}} 
\newcommand{\R}{\mathbb{R}}
\newcommand{\lra}{\longrightarrow}
\numberwithin{equation}{section}
\begin{document}
\title[{Maximal Percentages in P\'olya's Urn}]{Maximal Percentages in P\'olya's Urn}

	\author{Ernst Schulte-Geers and Wolfgang Stadje}
\address{Bundesamt f\"ur Sicherheit in der Informationstechnik (BSI), Godesberger Allee 185--189, 53175 Bonn, Germany} 
	\email{ernst.schulte-geers@bsi.bund.de}
	\address{Institute of Mathematics, University of Osnabr\"{u}ck, 49069 Osnabr\"{u}ck, Germany}
    \email{wstadje@uos.de}
\keywords{P\'olya's urn; binomial random walk; all-time maximal percentage; exact distribution.}

\begin{abstract}
We show that the supremum of the successive percentages of red balls in P\'olya's urn model is 
almost surely rational, give the set of values that are taken with positive probability 
and derive several exact distributional
results for the all-time maximal percentage. 

{\it 2010 Mathematics Subject Classification}: 60C05.
\end{abstract}
	\maketitle


\section{Introduction}
The classical urn of Eggenberger-P\'olya \cite{pe} contains initially $r\geq 1$  red balls and $b\geq 1$ 
black balls. In the course of the drawing
process  in
each draw
one ball is taken from the urn (where each ball in the urn has the same chance of being drawn), and this ball 
and another $d\geq 1$ balls of the same color are put into the urn. The theory of P\'olya urn schemes 
is expounded in \cite{mahmoud}, where also an extensive bibliography can be found. 

Let $R_n$, $n\ge 1$, denote 
the number of red balls in the urn after the $n$th draw and set $R_0=r$.  
 The ratio $Z_n=R_n/(nd+r+b)$ gives the 
percentage of red balls among all balls in the urn 
at ``time'' $n$. Let $$S_{r,b}=\sup_{n \geq 0} Z_n$$ be the supremum of all successive percentages of red balls
during the entire drawing process.

In this paper we show that $S_{r,b}$ is attained almost surely 
and that $\PP(\hbox{$S_{r,b}$  is rational})=1$, thereby settling in the affirmative 
a conjecture of  
 Knuth (posed in the answer to problem 88 of  pre-fascicle 5a to volume $4B$ of 
{\em The Art of Computer Programming} \cite{knf5a}). The support of $S_{r,b}$ (i.e., 
the set of values that are taken 
 with positive 
probability) is also given.  
Moreover, we derive exact formulas for certain values of the 
distribution function of $S_{r,b}$, mainly in the case $d=1$. For general $d\ge 1 $ we show that  

\begin{align*}
\PP(S_{r,b}>(t-1)/t)&=\sum_{n=0}^\infty \dfrac{a+1}{n(t-1)+a+1}\\
& \hspace{0.3cm} {nt+a \choose n} \dfrac{B(n(t-1)+a+1+(r/d),n+(b/d))}{B(r/d,b/d)},  
\end{align*} 
where $(r+b)/r\leq t \in\mathbb{N}$, $a=\lfloor (b(t-1)-r)/d\rfloor$ 
and $B(\alpha, \beta)$ is the Beta function. For $d=1$ we have 
\begin{align*} 
\PP(S_{1,t-1}\leq 1/t)
=\left\{\begin{array}{ll} 1-\ln 2  &\mbox{ for } t=2\\
\Big[ (t-1)\left(1-\dfrac{1}{t}\right)^{t-2}-1\Big] \Big/(t-2) &\mbox { for } t>2 \end{array}\right. 
\end{align*} 
and  
	\begin{equation*}\\\begin{array}{ll}\PP(S_{1,1}&=(t-1)/t)=\dfrac{2t-3}{t}H(1-(1/t))-
\dfrac{t-2}{t}H(1-(2/t))\\
& \hspace{3.2cm} - \ \dfrac{t-2}{t-1}\\ \\ 
\PP(S_{1,1}&\le (t-1)/t)=(1-(1/t))H(1-(1/t)), 
\end{array}\end{equation*}
where $H(x)=\Psi(x+1)+\gamma$. Here 
$\Psi = \Gamma'/\Gamma $ denotes the Digamma function and $\gamma$ is Euler's constant. 
Another exact result in form of an infinite series is (again for $d=1$) 
$$\PP(S_{t-1,1} >(t-1)/t)  
=(t-1)\sum_{n=0}^\infty\frac{(nt)!}{(nt-n+1)!}\frac{((n+1)(t-1))!}{((n+1)t)!}. 
$$ 

In the course of our derivations we obtain a remarkable {\it equidistribution property} of the supremum 
$M(p) = \sup_{n\in \N} n^{-1}(B_1+\cdots + B_n)$ of the 
binomial random walk generated by 
i.i.d. Bernoulli variables $B_i$ (i.e., $\PP (B_i=1)=1-\PP(B_i=0)=p\in (0,1)$). 
For $t\in \N$ with $pt\le 1$ it is shown that 
\begin{align*} 
&\PP(M(p)\in (1/(k+1),1/k])= p/(1-p) \ \text{ for every } \ k \in \{1,\ldots, t-1 \}\\  
&\PP(M(p)\in (p, 1/t])=(1-tp)/(1-p).
\end{align*}  

\section{Used facts and related work}
In P\'olya's urn scheme let  $X_n=1$ if a red ball is drawn in the  $n$th 
draw and $X_n=0$ otherwise, and let $X=(X_1,X_2,\ldots)$ be the full 
sequence of these red ball indicators.

The following facts about P\'olya's urn are well-known \cite{black,freed}.  
\\

(1) $Z_n$ converges almost surely to a random variable $Z$, 
which has a Beta$(r/d,b/d)$ distribution on $(0,1)$. 
(Here and in the sequel Beta$(\alpha,\beta)$ denotes the Beta 
distribution  with parameters $\alpha, \beta > 0$). 

(2) Conditionally on $Z=z$ , $X_1, X_2, \ldots $ 
are independent, $\{0,1\}$-valued random variables with $\PP(X_i=1)=z$. 
\\

Thus the percentage $Z_n$ of red balls eventually tends to a random value $Z$, 
and if $Z_n$ ``exits'' to $Z=z$ the red ball indicators
behave like independent $0-1$ variables with ``success probability'' $z$. 
One can therefore try to average known results for the Bernoulli sequence to
obtain results for P\'olya's urn.

For a sequence of $\{-1,1\}$-valued Bernoulli variables $U_1,U_2,\ldots$ with $\PP(U_i=1)=1-\PP(U_i=-1)=p
\in (0,1)$ 
 the supremum  
$$M=\sup_{n\geq 1} n^{-1}(U_1+\ldots +U_n)$$ 
of the averages of their partial sums  
was considered in \cite{stad}. 
 It was shown that this supremum is attained and that   
\begin{enumerate}
\item $\PP(M\in (2p-1,1]\cap \mathbb{Q})=1$ 
\item  $\PP(M=x)>0$ for each $x\in (2p-1,1]\cap \mathbb{Q}$.
\end{enumerate}
Moreover, explicit formulas for the distribution of $M$ were derived.
Note that $M$ is one of the rare examples of a naturally occurring random variable 
taking every rational number in some interval with positive probability. 

 In the sequel we combine these results
to study the all-time maximal percentage of red balls in P\'olya's urn.

\section{Existence and Possible Values of Maxima}
Let $X$ be the P\'olya red ball indicator sequence.
We have already remarked that one may view this sequence as a randomized Bernoulli sequence.
To make this note self-contained 
 we include a short proof.  
\begin{thm}\label{thmKK}  
Let $Z$ and $Y=(Y_1,Y_2,\ldots ) $ be defined on some probability space such that: 
(a) $Z$ is $Beta(r/d,b/d)$-distributed. \\[0.1cm]
(b) Conditionally on $Z=z$ , $(Y_1,Y_2,\ldots )$ is an i.i.d. sequence of $\{0,1\}$-valued 
random variables 
with $\PP(Y_i=1)=1-\PP (Y_i=0)=z$.\\[0.1cm]
Then $Y\stackrel{d}= X$.
\end{thm}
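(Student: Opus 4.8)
The plan is to show that $X$ and $Y$ have identical finite-dimensional distributions; since both are $\{0,1\}$-valued sequences, this determines their laws on $\{0,1\}^{\N}$ and hence gives $Y\deq X$. Write $\alpha=r/d$ and $\beta=b/d$, so that $Z$ is $\mathrm{Beta}(\alpha,\beta)$-distributed. Fix $n\ge 1$ and a pattern $(x_1,\ldots,x_n)\in\{0,1\}^n$ containing exactly $k$ ones, and aim to verify that $\PP(X_1=x_1,\ldots,X_n=x_n)=\PP(Y_1=x_1,\ldots,Y_n=x_n)$.

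First I would compute the urn probability directly. Before the $(m+1)$-th draw the urn holds $r+b+md$ balls in total, and the red count has grown by $d$ for each earlier red draw. Multiplying the successive conditional draw probabilities and collecting the red and black contributions separately, the $k$ red draws supply the numerator factor $r(r+d)\cdots(r+(k-1)d)$, the $n-k$ black draws supply $b(b+d)\cdots(b+(n-k-1)d)$, and the denominator is $(r+b)(r+b+d)\cdots(r+b+(n-1)d)$. This product depends on the pattern only through $k$, so $X$ is exchangeable. Pulling a factor $d$ out of every term (this is exactly where the assumption of adding $d$ balls enters) and rewriting rising factorials through the Gamma function gives
\[
\PP(X_1=x_1,\ldots,X_n=x_n)=\frac{\Gamma(\alpha+k)}{\Gamma(\alpha)}\cdot\frac{\Gamma(\beta+n-k)}{\Gamma(\beta)}\cdot\frac{\Gamma(\alpha+\beta)}{\Gamma(\alpha+\beta+n)},
\]
the powers of $d$ cancelling since $d^{k}d^{n-k}/d^{n}=1$.

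Next I would compute the same probability for $Y$. Conditioning on $Z=z$ and using conditional independence yields $\PP(Y_1=x_1,\ldots,Y_n=x_n\mid Z=z)=z^{k}(1-z)^{n-k}$, so integrating against the Beta density $z^{\alpha-1}(1-z)^{\beta-1}/B(\alpha,\beta)$ produces a Beta integral:
\[
\PP(Y_1=x_1,\ldots,Y_n=x_n)=\frac{1}{B(\alpha,\beta)}\int_0^1 z^{\alpha+k-1}(1-z)^{\beta+n-k-1}\,dz=\frac{B(\alpha+k,\beta+n-k)}{B(\alpha,\beta)}.
\]
Expanding both Beta functions via $B(u,v)=\Gamma(u)\Gamma(v)/\Gamma(u+v)$ turns the right-hand side into precisely the expression displayed above for $X$. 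Thus the two finite-dimensional laws agree for every $n$ and every pattern, and the theorem follows.

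As this is a de Finetti-type identity, I do not anticipate a real obstacle: the whole content is the recognition of a Beta integral together with the Gamma-function reflection $B(u,v)=\Gamma(u)\Gamma(v)/\Gamma(u+v)$. The only point demanding care is the bookkeeping in the urn product, namely checking that the red and black numerators accumulate independently of how the draws are interleaved and that the powers of $d$ cancel correctly, so that the $\mathrm{Beta}(r/d,b/d)$ parameters genuinely emerge.
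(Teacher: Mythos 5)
Your proof is correct and follows essentially the same route as the paper: both arguments verify that the finite-dimensional distributions agree by evaluating the Beta integral $B(\alpha+k,\beta+n-k)/B(\alpha,\beta)$ for $Y$ and matching it against the urn dynamics. The only (immaterial) difference is that you expand the urn's joint probability as an explicit product of rising factorials, whereas the paper instead reads off the one-step conditional probabilities $\PP(Y_{n+1}=1\mid Y_1,\ldots,Y_n)=(r+s_nd)/(nd+r+b)$ from the Beta quotient and recognizes the urn's transition rule.
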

\begin{proof} 
Let $(y_1,\ldots,y_n)\in\{0,1\}^n,\,s_n=y_1+\ldots +y_n$.
Then 
$$\PP(Y_1=y_1,\ldots,Y_n=y_n\mid Z=z)=z^{s_n}(1-z)^{n-s_n}.$$
Therefore, 
\begin{align*}
&\PP(Y_1=y_1,\ldots,Y_n=y_n)\\
&\hspace{0.9cm} = \dfrac{1}{B(r/d,\,b/d)} \int_0^1\,z^{(r/d)-1+s_n}(1-z)^{n+(b/d)-1-s_n}
\,dz\\& \hspace{0.9cm} =\dfrac{B((r/d)+s_n,\, n+(b/d)-s_n)}{B(r/d,\,b/d)}.
\end{align*} 
Hence a straightforward calculation (using the elementary properties of the Beta function) yields 
$\PP(Y_1=1)=r/(r+b)$ and  
$$\PP(Y_{n+1}=1 \mid Y_1=y_1,\ldots,Y_n=y_n)=\dfrac{r+s_nd}{nd+r+b},
$$   
so that the finite-dimensional distributions
of $Y$ coincide with those of $X$.
Thus, $Y\stackrel{d}= X$. 
\end{proof}

Since the red ball indicators in P\'olya's urn scheme are a randomized Bernoulli sequence
they have similar properties.

\begin{prp}  
In the situation of Theorem \ref{thmKK} let $S_n=\sum_{i=1}^n Y_i$ and $Z_n=(r+dS_n)/(nd+r+b)$.
Then $\lim_{n\to \infty}Z_n = Z$ a.s. and $\limsup_{n\to \infty} (S_n-nZ)=\infty$ a.s.
\end{prp}
\begin{proof}
Let $z\in (0,1)$. We only need to prove the assertion conditionally on $Z=z$.  
Since $Y$ is, conditionally on $Z=z$, a sequence of i.i.d. $0-1$ variables with probability $z$ for the value $1$, 
we have 
$$\PP(Z_n\lra z,\limsup_{n\to \infty}(S_n-nz)=\infty \mid Z=z)=1.$$
Note that $\PP(Z_n\lra z\,|\,Z=z)=1$ follows 
from the strong law of large numbers, 
and $\PP(\limsup_{n\to \infty}(S_n-nz)=\infty \mid Z=z)=1$ follows (e.g.) from the law of the iterated logarithm. 
\end{proof}

Let $M_{r,b}=\sup_{n\geq 1} Z_n$. We now use a variant of the proof of the corresponding property of $M$ 
in \cite{stad} 
to show that this supremum is almost surely attained.

\begin{prp} \label{prp0}
(a) The supremum in the definition of $M_{r,b}$ is almost surely attained. \\[0.1cm]
(b)$\;\PP( M_{r,b} \mbox { is rational})=1.$
\end{prp}
\begin{proof}
Again we condition on $Z=z$.
Since $\limsup_{n\to \infty} (S_n-nz)> b+1 $ a.s. we have that a.s. 
infinitely often  $(r+dS_n)-(nd+r+b)z>d(b+1)+r(1-z)-bz\ge 1+(r+b)(1-z)>0$, 
i.e., $Z_n>z$ for infinitely many $n$ a.s., and since $Z_n\longrightarrow z$ a.s.
the supremum is a.s. attained. It is then obviously rational. 
\end{proof}

Define 
\begin{align*} 
Q&=\Big\{ \max_{1\le j\le n} \dfrac{r+i_jd}{r+b+jd}\ \Big| \  n\in \N, i_1,\ldots,i_n\in \N \cup \{0\}, \\ 
&\hspace{1.5cm} i_j\le j \text{ for all } j\in \{1,\ldots,n\}, 0\le i_1\le i_2\le \ldots \le i_n\Big\}. 
\end{align*} 
Clearly,  $Q$ is the set of {\it all} possible maximal percentages for finite sequences of draws from the urn.  
It follows from Proposition \ref{prp0} that $\PP(M_{r,b}\in Q)=1$. Now we show 
\begin{prp}\label{supp} 
\ $\PP(M_{r,b} = q) >0 \ \text{ for all }\ q\in Q.$ 
\end{prp}
\begin{proof} 
Fix an element $q$ of $Q$ and corresponding $m \in \N$ and nonnegative integers $ i_1\le i_2\le \ldots \le i_m $ 
such that  $i_j \le j$ for all $j\in \{1,\ldots,m\}$ and 
$$q=\max_{1\le j\le m} \dfrac{r+i_jd}{r+b+jd}.$$
Let the maximum be attained at $n\in \{1,\ldots,m\}$. Then we have  
$$q = \dfrac{r+i_{n}d}{r+b+nd}= \max_{1\le j\le n} \dfrac{r+i_jd}{r+b+jd}.$$ 
Let $E$ be the event 
that $R_j=i_jd$ for $j=1,\ldots,n$ and $M_{r,b}=q$ 
(thus on this event the percentage after the $n$th draw is the largest among the first $n$ 
ones). It remains to show that $\PP(E)>0$.

Write $\PP(E\mid Z=z)$ in the form
\begin{align*} 
\PP(E \mid Z=z)&=\PP \Big(R_j=i_jd \text{ for } j=1,\ldots, n 
 \text{ and } \\
&  
\hspace{0.5cm}  \sup_{k\in \N} \dfrac{r+i_nd+(Y_1'+\ldots +Y_k')d}{r+b+nd+kd}\le 
\dfrac{r+i_{n}d}{r+b+nd} \ \Big| Z=z\Big)\\
&  
=\PP (R_j=i_jd \text{ for } j=1,\ldots, n\mid Z=z)
\\
&  
\hspace{0.5cm} \times \PP\Big( \sup_{k\in \N} \dfrac{r+i_nd+(Y_1'+\ldots +Y_k')d}{r+b+nd+kd}\le 
\dfrac{r+i_{n}d}{r+b+nd}\Big),   
\end{align*} 
where $Y_1',Y_2',\ldots$ is an i.i.d. sequence with $\PP(Y_i'=1)=1-\PP(Y_i'=0)=z$ which is independent 
of $Y$ and $Z$. The first factor on the righthand side is obviously positive. Next, the inequality 
$$\sup_{k\in \N} \dfrac{r+i_nd+(Y_1'+\ldots +Y_k')d}{r+b+nd+kd}\le \dfrac{r+i_{n}d}{r+b+nd}$$ 
holds if and only if
$$ \sup_{k\in \N} \dfrac{Y_1'+\ldots +Y_k'}{k} \le \dfrac{r+i_nd}{r+b+nd}.$$
By \cite{stad}, this latter inequality occurs with positive 
probability if $$\PP (Y_1'=1)< (r+i_nd)/(r+b+nd).$$ 
It follows that $\PP(E\mid Z=z)>0$ for $z\in (0,  (r+i_nd)/(r+b+nd))$.  Hence, 
\begin{align*} 
\PP(E)\ge \int_0^{\min [1,(r+i_nd)/(r+b+nd)]}\PP(E \mid Z=z) \beta_{r/d,b/d}(z)\ dz>0,   
\end{align*} 
where $\beta_{r/b,b/d}(z)$ is the density of Beta$(r/b,b/d)$. The proof is complete. 
\end{proof} 

\begin{rem}\label{support} 
Proposition \ref{supp} can also be formulated in the following form. Let supp$(U)$ denote 
the support of the random variable $U$. Let 
$q_{r,b}(i,n) = (r+id)/(r+b+nd)$ for $i,n \in \N \cup \{0\}$. Then 
\begin{align}\label{Srb} 
\text{supp}(S_{r,b}) = \{ q_{r,b}(i,n) \mid 0\le i \le n, q_{r,b}(i,n) \ge r/(r+b)\} 
\end{align}
and
\begin{equation}\label{Mrb}
\text{supp}(M_{r,b}) = \text{supp}(S_{r,b+d}) \cup \text{supp}(S_{r+d,b}).
\end{equation}   
(\ref{Mrb}) is obvious, and 
(\ref{Srb}) is proved in the same way as Proposition \ref{supp} (consider sequences of draws
beginning with $X_1=\ldots =X_{n-i}=0, X_{n-i+1}=\ldots, X_n=1$). 

In particular, for $d=1$
we obtain $\supp(S_{r,b})=[r/(r+b),1)\cap \Q$.  
\end{rem} 
\section{Some Closed-Form Results on Maximal Percentages for the Binomial Random Walk}

In this section we only consider the case $d=1$. 
We start with two exact results for the binomial random walk, 
which may be of independent interest. Although they are consequences of the t-ballot theorems
they apparently have not been formulated in this form before.  

In the sequel fix $0<p<1,\, q=1-p$ and let $Y_1,Y_2,...$ be i.i.d. $0-1$ random 
variables with $\PP(Y_i=1)=p$. Introduce their partial sums $S_n=Y_1+\cdots+Y_n$ and define, 
for $r,b\in \mathbb{N}\cup\{0\}$,   
$$M_{r,b}(p)=\sup_{n\geq 1} \frac{r+S_n}{r+b+n} \mbox{ and } M(p)=M_{0,0}(p)$$
$$L_{r,b}(p)=\inf_{n\geq 1} \frac{r+S_n}{r+b+n} \mbox{ and } L(p)=L_{0,0}(p).$$ 
Since 
$\PP(M_{r,b}(p)\leq x)=\PP(L_{b,r}(q)\geq 1-x)$ for $0\leq x \leq 1$ 
the distribution function of $L_{b,r}(q)$ can be obtained from that of $M_{r,b}(p)$.\\
 Clearly $M_{r,b}(p)\leq x$ if and only if $S_n-nx \leq bx-(1-x)r$ for all $n\in\mathbb{N}$.\\ 
Thus if $x\in \mathbb{Q}\cap(0,1)$,  say $x=s/t$ for positive integers $s$ and $t$ with $\gcd(s,t)=1$, 
the value of $\PP(M_{r,b}(p)\leq s/t)$ depends on $s,t$ only through the integer value $m=sb-(t-s)r$.\\
For $m\in \Z_+ $ we define  $a_m=\PP(tS_n-ns\leq m \mbox{ for all } n\in\mathbb{N})$.;

\begin{rem}  
For two integers $s,t$ satisfying $(s,t)=1$, $p<s/t<1$ the following can be shown 
(along the lines of \cite{stad}):
	\begin{enumerate}
		\item[(a)] The sequence $a_0,a_1,a_2,\ldots$ has a rational generating function; it is given by 
$q(\sum_{j=0}^{s-1} a_jz^j)/(pz^t-z^s+q)$. This function is regular for $|z|<1$. 
\item[(b)] The denominator
	$f(z)=pz^t-z^s+q$  has only simple roots: exactly $s-1$ roots $z_1,\ldots,z_{s-1}$ 
inside the unit disk, $z_{s}=1$, and exactly $t-s$ roots $z_{s+1},\ldots,z_{t}$ outside the unit disk.
\end{enumerate}
Thus $a_m$ can be written as a linear combination of $1$ and the $(m+1)$th 
negative powers of the roots of $f(z)$ outside the unit disk. However, explicit expressions are
hard to come by. 
\end{rem}

Let $t\geq 2$ be an integer. In the sequel we give expressions for the probabilities 
$\PP(M(p)\leq (t-1)/t)$  and $\PP(M(p)\leq 1/t)$. 

For the first case we need the $t-$ary tree function $T_t(z)$.
 Its power series is given by 
$$T_t(z)=\sum_{n=0}^\infty {nt \choose n} \frac{z^n}{n(t-1)+1}.$$ 
This power series converges for $|z|<\frac{1}{t}(1-\frac{1}{t})^{t-1}$
and for $z=\frac{1}{t}(1-\frac{1}{t})^{t-1}$
and represents there the unique solution of  the implicit equation $T_t(z)=1+z\,T_t(z)^t$.  
(Thus $1/T_t$ is the inverse function of $y\mapsto y^{t-1}(1-y)$ for $|1-y|<1/t$.) We define 
$$R_t(p)=pT_t(qp ^{t-1}).$$

We recall the following fact from path counting combinatorics (``$t$-ballot numbers").
(See e.g. \cite{kn4a}, Problem 26 of Section  7.2.1.6., for an equivalent statement.)
\begin{lem}\label{lemtbn}   
Let $a\in \mathbb{N}\cup\{0\}$. The number of paths, with steps  in 
$\{(0,1), (1,0)\}$,  from $(0,0)$ to $(n,n(t-1)+a)$ whose points lie
on or below the line $y=a+x(t-1)$ is the coefficient $[z^n] T_t(z)^{a+1}$. 
We have 
$$[z^n] T_t(z)^{a+1}=
\dfrac{a+1}{n(t-1)+a+1}{\displaystyle {nt+a \choose n}}.
$$ 
\end{lem}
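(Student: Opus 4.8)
The plan is to prove the two assertions of the lemma separately: first the combinatorial identity that the path count equals $[z^n]T_t(z)^{a+1}$, and then the closed-form evaluation of that coefficient.

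First I would reduce the two-dimensional path-counting problem to a one-dimensional walk. To a path with unit up-steps $(0,1)$ and right-steps $(1,0)$ I associate the ``potential'' $\pi=y-(t-1)x$, so that an up-step changes $\pi$ by $+1$ and a right-step by $-(t-1)$. Because the region $\{y\le a+(t-1)x\}$ is a half-plane, the path lies weakly below the line $y=a+(t-1)x$ if and only if $\pi\le a$ at every vertex. Writing $\tilde\pi=a-\pi$ turns this into the statement that $\tilde\pi$ is a walk starting at height $a$, ending at height $0$ (since the endpoint lies on the line), with steps $-1$ for each original up-step and $+(t-1)$ for each right-step, which stays $\ge 0$ throughout. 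Thus, marking right-steps by $z$, the path count $N(n,a)$ equals the number of such walks from $a$ to $0$ with exactly $n$ up-steps of size $t-1$ and $n(t-1)+a$ down-steps.

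Next I would identify the generating function $\sum_n N(n,a)z^n$ with $T_t(z)^{a+1}$. The key is to read the defining relation $T_t=1+zT_t^{t}$ combinatorially: it says that $T_t(z)$, with $z$ marking up-steps, is the generating function of first-passage walks that descend from a given level to the next lower level while staying weakly above it, the first step being either the single down-step (contributing $1$) or an up-step of size $t-1$ followed by $t$ independent one-level descents (contributing $zT_t^{t}$). To bring our walks into this form I would append one extra down-step at the very end; this is a bijection between walks from $a$ to $0$ staying $\ge 0$ and first-passage walks from $a$ to $-1$, and it leaves the number of up-steps unchanged. Such a first-passage walk factors uniquely into $a+1$ consecutive one-level descents, one starting from each of the levels $a,a-1,\dots,0$, whence its generating function is $T_t(z)^{a+1}$. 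This yields $N(n,a)=[z^n]T_t(z)^{a+1}$, the first claim. Finally I would extract the coefficient by Lagrange inversion: substituting $u=T_t-1$ converts the functional equation into $u=z(1+u)^{t}$, to which the Lagrange--B\"urmann formula applies with $H(u)=(1+u)^{a+1}$, giving $[z^n]T_t^{a+1}=\tfrac1n[u^{n-1}](a+1)(1+u)^{a}(1+u)^{tn}=\tfrac{a+1}{n}\binom{nt+a}{n-1}$, which I would then rewrite as $\tfrac{a+1}{n(t-1)+a+1}\binom{nt+a}{n}$ by an elementary factorial manipulation.

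I expect the main obstacle to be the bookkeeping in the second step, namely getting the exponent exactly right: the naive decomposition of the $a\to 0$ walk into first passages across the levels $a,\dots,1$ produces only $a$ factors of $T_t$, and it is precisely the append-a-down-step bijection, which turns the terminal excursion at level $0$ into one further genuine one-level descent, that supplies the missing factor and yields $T_t^{a+1}$ rather than $T_t^{a}$. The reduction in the first step also rests on the small but essential observation that the line-avoidance constraint need only be checked at the lattice points, while the Lagrange inversion in the last step is entirely routine.
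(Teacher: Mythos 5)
Your proof is correct. Note, however, that the paper does not prove this lemma at all: it is stated as a recalled fact from path-counting combinatorics ($t$-ballot numbers), with a pointer to Problem 26 of Section 7.2.1.6 in \cite{kn4a}. So you have supplied an argument where the authors supply only a citation. Your route --- reducing to a one-dimensional walk via the potential $y-(t-1)x$, appending a final down-step to turn the nonnegative walk from $a$ to $0$ into a first passage from $a$ to $-1$, factoring that uniquely into $a+1$ one-level first-passage descents each generated by $T_t=1+zT_t^t$, and then extracting the coefficient by Lagrange--B\"urmann with $u=T_t-1$, $u=z(1+u)^t$, $H(u)=(1+u)^{a+1}$ --- is the standard generatingfunctionological proof of the $t$-ballot formula, and all the details check out: the half-plane constraint is indeed equivalent to the vertex constraint because the bounding line has positive slope; the size-one down-steps guarantee that the first-passage decomposition exists and is unique; and $\tfrac{a+1}{n}\binom{nt+a}{n-1}=\tfrac{a+1}{n(t-1)+a+1}\binom{nt+a}{n}$ since $nt+a-n+1=n(t-1)+a+1$. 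Your self-diagnosed ``main obstacle'' (the off-by-one in the number of descents) is exactly the right thing to worry about, and your appended-down-step bijection resolves it cleanly. An alternative, equally short route that you might compare with is the cycle lemma of Dvoretzky and Motzkin, which gives the closed form directly and from which the generating-function identity follows by reversing your Lagrange-inversion step; but what you wrote is complete as it stands.
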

 
\begin{prp}\label{prp1}  
If  $m=b(t-1)-r\geq 0$, we have  
\begin{eqnarray}{\mbox{  }}
 \PP(M_{r,b}(p)\leq (t-1)/t)&=&1-R_t(p)^{m+1}\\ \\ 
 \PP(M_{r,b}(p)=(t-1)/t)&=&\left\{\begin{array}{ll}  p-R_t(p)+qR_t(p)^{t-1}\  &\mbox{for } m=0\\ 
 \\ 
\big(1-R_t(p)\big)R_t(p)^m \ &\mbox {for } m\geq 1 \end{array}\right.
\end{eqnarray}
\end{prp}
\begin{proof} (1) Consider $\PP(M_{r,b}(p)>(t-1)/t)$. Start a lattice path $R_n=r+S_n, B_n=b+n-S_n$ in $(r,b)$ 
by stepping $(1,0)$ if $Y_i=1$ , $(0,1)$ if $Y_i=0$.
Since $M_{r,b}(p)>(t-1)/t$, there must be a smallest 
$j\geq b$ such that $R_\ell=j(t-1)+1, B_\ell=j$ and $(t-1)B_i\geq R_i$ for $i\leq \ell=jt-r-b+1$.\\
Equivalently, $j-b$ is the first position where 
the lattice path $(n-S_n,S_n)$ starting at $(0,0)$ steps at time $\ell$ for the first time 
above the line $y=(t-1)x+m$. Call this event $A_j$. Clearly the last step is $Y_\ell=1$, appended
to a path from $(0,0)$ to $(j-b,j(t-1)-r)=(j-b,(j-b)(t-1)+(t-1)b-r)$ 
of the type considered in the lemma above. 
Since each path from $(0,0)$ to $(j-b, j(t-1)-r+1)$ has the same 
probability $p^{j(t-1)-r+1}q^{j-b}$, we get 
\begin{align*} 
\PP(A_j)&=[z^{j-b}] T_t(z)^{m+1} 
p^{j(t-1)-r+1}q^{j-b}\\&=p^{m+1}[z^{j-b}]T_t(z)^{m+1} p^{(j-b)(t-1)}q^{j-b}. 
\end{align*}  
Since $\{M_{r,b}(p)>(t-1)/t\}$ is the disjoint union of the $A_j$, the
first assertion follows.\\
(2) For $k\geq 0$ let $b_k=\PP(tS_n<n(t-1) +k)  \mbox{ for all } n\in\mathbb{N})$. 
Conditioning with respect to $Y_1$ then shows that
$b_0=qa_{t-2}$ and $b_j=a_{j-1}$ for $j\geq 1$. Since $\PP(M_{r,b}(p)=(t-1)/t)=a_m-b_m$ the second assertion 
follows from the result in (1). 
\end{proof}

\begin{rems}\label{rem1}   
(a) The path counting argument above is due to Knuth, 
who used it to determine $\PP(S_{1,1}>(t-1)/t)$ in P\'olya's urn (see next section).
Alternatively the results could be obtained using the theorem from 
\cite{stad} given below, but in a more laborious way.\\
(b) In the derivation no restrictions on $p$ (other than $0<p<1$) were imposed, the formulas 
above are thus also valid
for $p\geq (t-1)/t$. In fact, by the remark before Lemma \ref{lemtbn}  we have 
$R_t(p)=1$ for $p\geq (t-1)/t$ and the corresponding probabilities
in Proposition \ref{prp1} evaluate to 0 (as they must do in view of the strong law of large numbers).\\
(c) $R_t(p)\lra p$ for $t\lra \infty$. More precisely, $\lim_{t\to \infty}q^{-1}p^{-t}[R_t(p)-p]=1$.
\end{rems} 

\begin{exls}\label{ex1}   
(a) Let $t=2$, $p<1/2$. Then $R_2(p)=\min(1,p/q)$ and  we recover the well-known facts that for $m=0$
$$\PP(M(p)> 1/2)=p/q \text{ and } \PP(M(p)=1/2)=(q-p)p/q.$$ 
and that for $m\geq 1$ 
$$\PP(M_{r,b}(p)\geq 1/2)=(p/q)^m \text{ and } \PP(M_{r,b}(p)=1/2)= (p/q)^m(q-p)/q.$$ 
(b) 
Let $p=1/2$. Then $R_2(p)=1$ and  
\begin{align*} 
&R_3(p)\approx 0.618034, R_4(p)\approx 0.543689, R_5(p)\approx 0.518790\\  & 
R_6(p)\approx 0.50866,
R_7(p)\approx 0.504138.
\end{align*} 
For example we have 
$$\PP \Big( \sup_{n\geq 1} \, \dfrac{S_n}{n}>\dfrac{6}{7}\Big)\approx  0.504138 \mbox{ and }
\PP \Big( \sup_{n\geq 1} \, \dfrac{S_n}{n+1}>\dfrac{2}{3}\Big)\approx 0.381937$$ 
\end{exls} 

Let us now turn to the second probability 
$\PP(M(p)\leq 1/t)$. We use the following result of \cite{stad}.

\begin{thm}\label{thmSta}  
Let $s,r \in\mathbb{Z}\setminus\{0\}$, $s>0$, $|r|<s$. The polynomial 
\begin{equation}\label{eqpol}p z^{2s}- z^{s+r} +q\end{equation} 
has exactly $s-r$ simple roots $z_{s+r},\ldots,z_{2s-1}$ outside the unit
disk. If $p<\tfrac{r+s}{2s}$, we have 
\begin{eqnarray} \PP(M(p)\leq \dfrac{r+s}{2s-1})&=&\prod_{i=r+s}^{2s-1}(1-z_i^{-1})\\
\PP(M(p)=\dfrac{r+s}{2s-1})&=&\prod_{i=r+s}^{2s-1}(1-z_i^{-1})+p\,\prod_{i=r+s}^{2s-1}(1-z_i). 
                  \end{eqnarray}
\end{thm}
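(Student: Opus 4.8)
The plan is to read $\PP(M(p)\le x)$, with threshold $x=(r+s)/(2s)$ (the value compatible with the hypothesis $p<(r+s)/(2s)$ and with the displayed range of roots; the denominator in the displayed probability should read $2s$), as a boundary non-crossing probability for a random walk and to evaluate it through the roots of $f(z)=pz^{2s}-z^{s+r}+q$. Writing $M(p)\le x$ as $2sS_n\le(s+r)n$ for all $n$, I set $V_n=2sS_n-(s+r)n=\sum_{i=1}^n\eta_i$, an integer random walk with steps $\eta_i=s-r$ (probability $p$) and $\eta_i=-(s+r)$ (probability $q$). The hypothesis $p<(r+s)/(2s)$ is exactly $\EE\eta_i=s(2p-1)-r<0$, so the walk has negative drift and $\sup_n V_n<\infty$ a.s. For $m\ge0$ put $a_m=\PP(V_n\le m\text{ for all }n\ge1)$, so that the target is $a_0=\PP(M(p)\le x)$.

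First I would derive the generating function $A(z)=\sum_{m\ge0}a_m z^m$. Conditioning on the first step and using the renewal structure (an up-step $+(s-r)$ lowers the admissible level by $s-r$, a down-step $-(s+r)$ raises it by $s+r$) gives $a_m=p\,a_{m-(s-r)}+q\,a_{m+(s+r)}$ for $m\ge0$, with $a_j=0$ for $j<0$. Multiplying by $z^{m+s+r}$ and summing yields, after collecting terms, the rational form $A(z)=q\big(\sum_{j=0}^{s+r-1}a_j z^j\big)/f(z)$ with $f(z)=pz^{2s}-z^{s+r}+q$; this is the $m=0$ instance of the generating function recorded in the Remark following Lemma \ref{lemtbn}, specialized to the pair $(s+r,2s)$. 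The numerator $N(z)=\sum_{j=0}^{s+r-1}a_j z^j$ has degree at most $s+r-1$.

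The core step is the root structure of $f$. By the same analysis as in that Remark, $f$ has $s+r-1$ simple roots inside the unit disk, the root $z=1$ on it (simple because $f'(1)=2sp-(s+r)<0$ under the drift hypothesis), and $s-r$ simple roots $z_{s+r},\dots,z_{2s-1}$ outside. Since $a_m\in[0,1]$, $A(z)$ is analytic in $|z|<1$, so $N$ must vanish at each of the $s+r-1$ interior roots; as $\deg N\le s+r-1$ equals their number, $N$ is a constant multiple of $\prod_{\text{inside}}(z-\alpha_k)$. These factors cancel against $f=p(z-1)\prod_{\text{inside}}(z-\alpha_k)\prod_{i=s+r}^{2s-1}(z-z_i)$, leaving $A(z)=\kappa/\big[(z-1)\prod_{i=s+r}^{2s-1}(z-z_i)\big]$ for a constant $\kappa$. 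I fix $\kappa$ by Abel's theorem: since $a_m\uparrow1$ (negative drift), $\lim_{z\to1}(1-z)A(z)=1$, giving $\kappa=-\prod_{i=s+r}^{2s-1}(1-z_i)$. Evaluating at $z=0$ then yields $a_0=A(0)=\prod_{i=s+r}^{2s-1}(1-z_i^{-1})$, the first formula.

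For the second formula I would write $\PP(M(p)=x)=\PP(V_n\le0\ \forall n\ge1)-\PP(V_n\le-1\ \forall n\ge1)=a_0-q\,a_{s+r-1}$, where the last equality is a one-step argument: an initial up-step $+(s-r)\ge1$ already violates $V_1\le-1$, so the walk must first step down (probability $q$) and thereafter keep the shifted walk $\le(s+r)-1$. The coefficient $a_{s+r-1}$ is the leading coefficient of $N$, which by the cancellation above equals $p\kappa/q$; hence $q\,a_{s+r-1}=p\kappa=-p\prod_{i=s+r}^{2s-1}(1-z_i)$ and $\PP(M(p)=x)=\prod_{i=s+r}^{2s-1}(1-z_i^{-1})+p\prod_{i=s+r}^{2s-1}(1-z_i)$. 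I expect the main obstacle to be the root count itself --- that exactly $s+r-1$ roots lie inside, that $z=1$ is the only root on the circle, and that all roots are simple. This is delicate precisely because $z=1$ sits on $|z|=1$, so a direct Rouch\'e comparison there fails; the clean route is to perturb (compare $f$ with its dominant term on circles $|z|=1\pm\delta$, or deform $p$ slightly) and to use the strict drift inequality $f'(1)\ne0$ to peel $z=1$ off from the interior roots. The renewal recursion, the cancellation, and the Abel normalization are then routine.
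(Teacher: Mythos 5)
This theorem is not proved in the paper at all: it is imported verbatim from \cite{stad} (``We use the following result of \cite{stad}''), with only the remark that follows extending it to $(r,s)=k>1$. So the only comparison possible is with the argument of that source, and your reconstruction --- renewal recursion for $a_m$, rational generating function $A=qN/f$, analyticity in the open unit disk to force $N$ to vanish at the interior roots, Abel normalization $\lim_{z\to 1^-}(1-z)A(z)=1$, evaluation at $z=0$, and the one-step identity $\PP(M(p)=x)=a_0-qa_{s+r-1}$ --- is indeed that approach, and the individual computations (the recursion, the drift condition, the value of $\kappa$, the leading coefficient of $N$) all check out. Your correction of the threshold from $(r+s)/(2s-1)$ to $(r+s)/(2s)$ is also right, as confirmed by the way the theorem is invoked in the proof of Proposition \ref{prp2}, where $s=t$, $r=-(t-2)$ and the probability computed is $\PP(M(p)\le 1/t)=\PP(M(p)\le (r+s)/(2s))$.

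There is, however, one genuine gap: the root-location lemma you single out as ``the main obstacle'' is \emph{false} as you state it. If $|z|=1$ and $f(z)=0$ then $1=|pz^{2s}+q|$ forces $z^{2s}=1$ and hence $z^{s+r}=1$, so the roots of $f$ on the unit circle are exactly the $g$-th roots of unity with $g=\gcd(2s,s+r)$; whenever $g>1$ there are $g$ roots on the circle and only $s+r-g$ strictly inside. This happens even for coprime $(r,s)$ when $r$ and $s$ are both odd (e.g.\ $s=3$, $r=1$ gives $f(-1)=0$), and it happens in the paper's own application of the theorem, where $s+r=2$ and $2s=2t$, so $g=2$ and $z=-1$ is always a root of $pz^{2t}-z^2+q$. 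In that situation $\deg N\le s+r-1$ strictly exceeds the number of interior zeros, so your argument that $N$ is a constant multiple of the interior factor collapses, and with it the cancellation and the identification of $\kappa$ and of $a_{s+r-1}$. The repair is short but must be said: when $g>1$ the walk $V_n$ lives on $g\Z$, so $a_m=a_{g\lfloor m/g\rfloor}$ and $A(z)=\bigl(1+z+\dots+z^{g-1}\bigr)\tilde A(z^g)$ vanishes at every $g$-th root of unity other than $1$; hence $N=Af/q$ (extended continuously to the closed disk) vanishes there as well, supplying exactly the missing $g-1$ zeros, after which the cancellation leaves $A(z)=\kappa/\bigl[(z-1)\prod_{i}(z-z_i)\bigr]$ and both formulas follow as you wrote them. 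This is in substance the $u=z^k$ reduction the paper records in the remark after the theorem, except that the relevant modulus is $\gcd(2s,s+r)$ rather than $\gcd(r,s)$.
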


\begin{rems}   
(a) In \cite{stad} this theorem is proved 
under the additional assumption $(r,s)=1$, but it remains valid if $(r,s)=k>1$. 
To see this let $u=z^k$. As a function of $u$ 
the polynomial \eqref{eqpol} has $(s-r)/k$ simple roots $u_i$ 
outside the unit disk, say $u_1,\ldots,u_{(s-r)/k}$, 
by the theorem above. 
If $\eta$ is a primitive $k$th root of unity, then as a function of $z$ 
the polynomial \eqref{eqpol}
has exactly the $s-r$ simple roots $z_{i,j}=\eta^j |u_i|^{1/k}, j=0,\ldots k-1$ and $i=1,\ldots,(s-r)/k$, 
outside the unit disk, and since  $\prod_{j=0}^{k-1} (1-z_{i,j}^{-1})=(1-u_i^{-1})$ and
$\prod_{j=0}^{k-1} (1-z_{i,j})=(1-u_i)$ the formulas above remain valid.\\
(b) In \cite{stad} also formulas for the corresponding $a_m$ in terms of the roots of \eqref{eqpol} 
were provided, but we do not use them here. 
\end{rems}

\begin{prp}\label{prp2}  
Let $p<1/t$. If  $m=b-(t-1)r\in \{0,\ldots,t-1\}$, we have   
\begin{eqnarray}{\mbox{  }}
	\label{ed1}\; \PP(M_{r,b}(p)\leq 1/t)&=&(1-tp)q^{-(m+1)}\\
	\label{ed2}\; \PP(M_{r,b}(p)=1/t)&=&(1-tp)q^{-(m+1)}p. 
\end{eqnarray}
For $m\geq t$ the $a_m$ can be computed by the recursion
\begin{equation} 
qa_k=a_{k-1}-pa_{k-t}. 
\end{equation}
For $p\geq 1/t$ the probabilities in \eqref{ed1} and \eqref{ed2} are $0$. 
\end{prp}
\begin{proof} (1) Let $m=0$ and set $s=t,\,r= -(t-2)$ in Theorem \ref{thmSta}. The polynomial
$pz^{2t}-z^2+q$ has $2t-2$ roots $z_2,\ldots,z_{2t-1}$ outside the unit disk, and
$a_0=\prod_{i=2}^{2t-1}(1-\tfrac{1}{z_i})$. Equivalently, since $+z_i,-z_i$ are roots, we have 
$a_0=\prod_{i=1}^{t-1} (1-\tfrac{1}{y_i})$ where $y_1,\ldots,y_{t-1}$ are the roots of $py^{t-1}-y+q$ 
outside the unit disk. Let $y=1/(1-u)$. Then we can write $a_0=\prod_{i=1}^{t-1}u_i$ where $u_1,\ldots,u_{t-1}$
are the non-zero roots of the polynomial $g(u)=p-(1-u)^{t-1}+q(1-u)^t$. Thus $qa_0=-g^\prime(0)=1-pt$.
Furthermore, since $\prod_{i=2}^{2t-2}z_i=-q/p$ we find from the second formula
$\PP(M(p)=1/t)=a_0-qa_0=pa_0$.\\
(2) By conditioning on $Y_1$ we find that $a_k=qa_{k+1}$ for $k=0,\ldots,t-2$, and 
$a_k=pa_{k-t+1}+qa_{k+1}$. The rest is straightforward.
\end{proof}

\begin{rems}   
(a) In the case $m=0$ and  $p\leq 1/t$, we find a curious equidistribution property: 
the maximum $M(p)$ lies in any of the intervals
$(1/(k+1),1/k]$ (for $1\leq k\leq t-1$) with the same probability $p/q$ (and it lies 
in the interval $(p,1/t]$ with probability $1-(t-1)p/q$).\\
(b) The result for $a_0$ can also be shown to follow from a path counting result 
(Barbier's theorem): the number of paths with step set $\{(0,1),(1,0)\}$ from $(0,0)$ to $(k,n)$
with $n>tk$ that never touch the line $y=tx$ except at $(0,0)$ is equal to 
$(n-tk) {n+k\choose n}/(n+k)$.
\end{rems}

\section{Distributional results for P\'olya's urn}

Returning to P\'olya's urn, recall that $S_{r,b}=\sup_{n\geq 0} Z_n=\max\{\frac{r}{r+b},\,M_{r,b}\}$ 
and let $I_{r,b}=\inf_{n\geq 0} Z_n=\min\{\frac{r}{r+b},\,L_{r,b}\}$.
We now give some formulas for the distribution function of $S_{r,b}$ and $I_{r,b}$ for special values. 

We introduce  
the generalized harmonic number function 
$$H(x)=\sum_{n\geq 1}\Big(\frac{1}{n}-\frac{1}{n+x}\Big)
=\Psi(x+1)+\gamma, \ \ x\in \R \backslash \{-1,-2,-3,\ldots\}$$ where 
$\Psi$ is the Digamma function 
(the logarithmic derivative of the $\Gamma$-function) and $\gamma$ is Euler's constant. 
For positive integers $p,q$ with $p<q$ Gauss has shown (see e.g. Problem 19 of Section 1.2.9 in \cite{kn1}) that
$$H(p/q)=\dfrac{q}{p}-\dfrac{\pi}{2}\cot \left(\frac{p}{q}\pi\right)
-\ln(2q)+2\sum_{1\leq n <q/2}\cos \left(\dfrac{2pn}{q}\pi\right)\,\ln\,\sin \left(\dfrac{n}{q}\pi\right).$$ 
Thus $H(p/q)$ can be expressed in terms of finitely many elementary functions. 

Let us first consider
the distribution function of $S_{r,b}$ at $(t-1)/t$. 
The obvious route to results is to condition on $Z$, use the formulas for 
the binomial random walk and integrate the power series of $T_t^m$ term-by term, 
using the Beta integrals. The general solution is given in the following proposition. Note 
that in the case $d>1$ we have to determine the probabilities 
$a_{m,d}=\PP(tdS_n-nds\leq m \mbox{ for all } n\in\mathbb{N})$.
Since clearly  $a_{m,d}=a_{\lfloor m/d\rfloor}$, (for $r\in\mathbb{R}$, $\lfloor r \rfloor$ denotes the largest integer not exceeding $r$) this can be reduced to the computation of the $a_m$, 
i.e., the case $d=1$, which we dealt with in Section 4.   
\begin{prp}
Let $m=b(t-1)-r\geq 0$ and $a=\lfloor m/d\rfloor$. Then

\begin{align*}
\PP(S_{r,b}>(t-1)/t)&=\sum_{n=0}^\infty \dfrac{a+1}{n(t-1)+a+1}\\
& \hspace{0.3cm} {nt+a \choose n} \dfrac{B(n(t-1)+a+1+(r/d),n+(b/d))}{B(r/d,b/d)}. 
\end{align*}
\end{prp}

For $d=1$ one gets series of rational functions of $n$, which can (in principle) 
be evaluated in closed form in terms of the Digamma function $\Psi$.  
We only give two examples for the results of these calculations:
\begin{prp}  
	For $d=1$ and $2\leq t \in\mathbb{N}$ we have 
\begin{equation*}\\
\begin{array}{ll}\PP(S_{1,1}&=(t-1)/t)=\dfrac{2t-3}{t}H(1-(1/t))-
\dfrac{t-2}{t}H(1-(2/t))\\
& \hspace{3.2cm} - \ \dfrac{t-2}{t-1}\\ \\ 
\PP(S_{1,1}&\le (t-1)/t)=(1-(1/t))H(1-(1/t)).  
\end{array}
\end{equation*}   
\end{prp}
Essentially this has already been shown by Knuth (\cite{knf5a}, Problem 88). 

The situation for $b=1,r=t-1$ is of special interest because in this case 
$p_+(t)=\PP(I_{t-1,1}\geq (t-1)/t) $ (the probability that the urn content process stays above 
 the line $y=(t-1)x$) as well as   
$p_-(t)=\PP(S_{t-1,1}\leq (t-1)/t) $ (the probability that it stays below the line $y=(t-1)x$) can 
both be considered.

Let $q_-(t)=1-p_-(t)=\PP(S_{t-1,1} >(t-1)/t)$. One finds that 
\begin{prp} For $d=1$, 
$$q_-(t) 
=(t-1)\sum_{n=0}^\infty\frac{(nt)!}{(nt-n+1)!}\frac{((n+1)(t-1))!}{((n+1)t)!}. 
$$ 
\end{prp}
Special values are 
\begin{itemize}
\item $q_-(2)=\ln 2\approx 0.6931 $
\item $q_-(3)=\dfrac{4}{27}\pi\sqrt{3}\approx 0.8061$ 
\item $q_-(4)=\dfrac{9}{32}\ln 2 +\dfrac{27}{128}\pi\approx 0.8576$ 
\item $q_-(5)\approx 0.8874, q_-(6)\approx 0.9068,\ldots, q_-(20)\approx 0.9726$. 
\end{itemize} 
Asymptotically we obtain $\lim_{t\to \infty} p_-(t)=0$, 
since clearly $q_-(t)\geq 1-(1/t)$ (the urn content process
steps from $(t-1,1)$ to $(t,1)$ with probability $1-(1/t)$). \\

Finally we look at the argument value $1/t$. In this case we have obtained the representation 
$$\PP(S_{1,t-1}\leq 1/t)=(t-1)\int_{0}^{1/t} (1-pt)q^{t-3}\,dp, $$
which can be evaluated elementarily. The result is given by 
\begin{prp}  For $d=1$, 
	\begin{align*} 
\PP(I_{t-1,1}\geq (t-1)/t)&=\PP(S_{1,t-1}\leq 1/t)\\&
=\left\{\begin{array}{ll} 1-\ln 2  &\mbox{ for } t=2\\
\big(\big[1-(1/t)\big]^{t-2}(t-1)-1\big)/(t-2) &\mbox { for } t>2 \end{array}\right. 
\end{align*} 
In particular, $p_+(t)\lra e^{-1}  \mbox { as } t \lra \infty $. 					  
\end{prp} 

It is also interesting to consider the other start positions $(a(t-1),a)$ ($2\leq a\in \mathbb{N}$)
on the line $y=(t-1)x$. Here one gets (again for $d=1$) 
	\begin{align*} 
\PP(I_{a(t-1),a}\geq (t-1)/t)&=\PP(S_{a,a(t-1)}\leq 1/t)\\
&=\int_0^{1/t} (1-pt)q^{-1}\,\beta_{a,a(t-1)}(p)\,dp.  
\end{align*} 
A short calculation shows that the latter 
integral can be expressed in terms of the binomial distribution. 
\begin{prp}  For $d=1$ and $2\leq a\in \mathbb{N}$, 
	\begin{align*} 
\PP(S_{a,a(t-1)}\leq 1/t)
=\PP(X_{at-1,1/t}=a)- \dfrac{1}{a(t-1)-1}  \PP(X_{at-1,1/t}>a)
\end{align*} 
where $X_{n,p}$ is a random variable having the 
binomial distribution with parameters $n$ and $p$. 
In particular, 
$$\PP(S_{a,a}\leq 1/2)=\big(1+\dfrac{1}{a-1}\big){2a-1 \choose a} 2^{-(2a-1)}-\dfrac{1}{2(a-1)} $$ 
 and 
$$\PP(S_{a(t-1),a}\leq 1/t)=O(a^{-1/2})\ {\rm as }\ a\lra \infty.$$ 
\end{prp}

In closing, we remark that for $t=2$ the ``equalization probability'' $\PP(S_{r,b}\geq 1/2)$ was already studied
in  \cite{ant} and \cite{wall}, where results equivalent to the ones above were obtained in this special case. 
In particular, for $t=2, m=b-r>0$ we get from \ref{ex1}
$$\PP(S_{r,b}\geq 1/2)=\int_0^{1}\min(1,p/q)^{b-r}\,\beta_{r,b}(p)\,dp=2\,\int_0^{1/2}\beta_{b,r}(p)\,dp,$$
yielding the identity 
$$\PP(S_{r,b}\geq 1/2)=2\,\PP(X_{b+r-1,1/2}\leq r-1),$$
which was shown in a different way in \cite{wall}.\\ 

\subsection*{NOTE ADDED IN PROOF} Finally we sketch how the general case can be treated. Let again $(s,t)$ be positive integers with $\gcd(s,t)=1$ and  introduce the generalized tree-function
$$T_{t/s}(z):=\sum_{n=0}^\infty \frac{z^n}{nt+1}{(nt+1)/s \choose n}\,.$$ 
Then $1/T_{t,s}(z)$ is the series  
$$\frac{1}{T_{t/s}(z)}=1-\sum_{n=1}^\infty \frac{z^n}{nt-1}{(nt-1)/s \choose n}\,.$$ 
and both series converge absolutely for $|z|\leq \tfrac{s}{t}(1-\tfrac{s}{t})^{t/s-1}$. If $0<p < s/t$ the roots of $pz^t-z^s+q$  in the closed unit disk are the values 
$$z_i(p):=\eta^iq^{1/s} T_{t/s}(pq^{(t-s)/s}\eta^{it}),\;i=1,\ldots, s$$
where $\eta$ is a primitive $s-$th root of unity (note that $z_s=1$), and the roots outside the unit disk are the values $w_i$ with $1/w_i(p)=y_i(p)$ where
$$y_i(p):=\omega^ip^{1/(t-s)} T_{t/(t-s)}(qp^{s/(t-s)}\omega^{it}),\;i=1,\ldots,t-s$$
and $\omega$ is a primitive $(t-s)$-th root of unity. Further, it is not hard to show (as in \cite{stad}) that 
$$a_0=a_0(p)=\prod_{i=1}^{t-s}(1-y_i(p))$$
and that 
$$g(z):=a_0+a_1z +\ldots + a_{s-1}z^{s-1}=a_0\prod_{i=1}^{s-1}(1-z/z_i)\;.$$ 
A similar argument as in the proof of \ref{prp2} shows that $g(1)=(s-tp)/q$.\\  If $m=bs-r(t-s)\geq 0$ we find that $$\PP(S_{r,b}\leq \frac{s}{t})=\int_0^1 a_m(p) \beta_{r,b}(p)\,dp$$ 
where the series for $a_m(p)$ can be (extracted from the information above and) integrated termwise.

\subsection*{Acknowledgements}
We would like to thank the referee for his very careful reading of the manuscript and for helpful comments. Special thanks go to Don Knuth
for a question that led to the note added in proof.

\end{document}